\newenvironment{bprooftree}
  {\leavevmode\hbox\bgroup}
  {\DisplayProof\egroup}
\newtheorem{theorem}{Theorem}
\newtheorem{corollary}[theorem]{Corollary}
\newtheorem{lemma}[theorem]{Lemma}
\theoremstyle{definition}
\newcommand{\pra}{\mathbf{PRA}}
\newcommand{\pa}{\mathbf{PA}}
\newcommand{\con}{\operatorname{Con}}
\newcommand{\pr}{\operatorname{Pr_{\mathbf{PRA}}}}
\newcommand{\zstar}{\mathbf{Z^*}}
\title[A Note on Iterated Consistency and Infinite Proofs]{A Note on Iterated Consistency\\ and Infinite Proofs}
\thanks{This is a pre-print (before peer-review) of an article published in the Archive for Mathematical Logic. The final authenticated version is available online at \url{https://doi.org/10.1007/s00153-018-0639-y}. It can also be accessed via \url{https://rdcu.be/2YJI} (view only). The journal version contains some improvements over the present pre-print: In particular it emphasizes that the result is only valid for somewhat restrictive formalizations of infinite proofs. It also discusses the connection with Ulf Schmerl's paper ``Iterated reflection principles and the $\omega$-rule'', J.~Symb.~Logic \textbf{47}(4), 721-733 (1982)}
\author{Anton Freund}
\begin{document}

\begin{abstract}
Schmerl and Beklemishev's work on iterated reflection achieves two aims: It introduces the important notion of $\Pi^0_1$-ordinal, characterizing the $\Pi^0_1$-theorems of a theory in terms of transfinite iterations of consistency; and it provides an innovative calculus to compute the $\Pi^0_1$-ordinals for a range of theories. The present note demonstrates that these achievements are independent: We read off $\Pi^0_1$-ordinals from a Sch\"utte-style ordinal analysis via infinite proofs, in a direct and transparent way.
\end{abstract}

\maketitle

The central result of Schmerl's \cite{schmerl79} work on iterated reflection is the fine structure theorem: Over primitive recursive arithmetic ($\pra$), $\alpha$ iterations of (uniform) $\Pi_{n+1}$-reflection prove the same $\Pi_n$-theorems as $\omega^\alpha$ iterations of $\Pi_n$-reflection, for $n\geq 1$ and $\alpha>0$ (we will only be concerned with arithmetical formulas, so we usually write $\Pi_n$ rather than $\Pi^0_n$). Together with Kreisel and L\'evy's result \cite{kreisel68}, which states that Peano arithmetic ($\pa$) is equivalent to the collection of arithmetical reflection principles over $\pra$, this yields
\begin{equation*}
 \pa\equiv_{\Pi_1}\pra+\{\con_\alpha(\pra)\,|\,\alpha<\varepsilon_0\}.
\end{equation*}
Here $\equiv_{\Pi_1}$ indicates that the two theories prove the same $\Pi_1$-formulas. Furthermore, $\con_\alpha(\pra)$ is a $\Pi_1$-formula which expresses transfinite iterations of consistency over $\pra$. It will be convenient to consider iterations which are slightly stronger than Schmerl and Beklemishev's version at limit stages; a precise definition can be found below. Beklemishev \cite{beklemishev03} extends Schmerl's results (in particular weakening $\pra$ to Kalm\'ar elementary arithmetic) and coins the notion of $\Pi^0_1$-ordinal: This bounds the number of iterations of consistency that are provable in a given theory. In most natural cases these iterated consistency statements exhaust all $\Pi_1$-consequences of that theory, as in the example above. The $\Pi^0_1$-ordinal is particularly significant for theories with strong $\Pi_1$-axioms, which are not captured by other notions of proof-theoretic ordinal. For example, Beklemishev \cite[Corollary~7.11]{beklemishev03} shows that $\pa+\con(\pa)$ has the $\Pi^0_1$-ordinal~$\varepsilon_0\cdot 2$.

Besides being fascinating in itself, the fine structure theorem is useful to analyse a range of theories. It is particularly pertinent for theories such as $\pa+\con(\pa)$, which are naturally phrased in terms of reflection (or consistency). However, when it comes to stronger theories (e.g.\ of second order arithmetic) infinite proofs \`a la Sch\"utte are currently much more powerful. Also, infinite proofs can be useful to analyse restrictions on the cut rank or the proof length. It is thus important to ask whether one can infer $\Pi^0_1$-ordinals from a Sch\"utte-style ordinal analysis. Indeed, there are indirect ways to achieve this: An ordinal analysis via infinite proofs characterizes a theory in terms of transfinite induction. We can then invoke the connection between transfinite induction and iterated consistency established by Schmerl \cite[Section~2]{schmerl79}. Let us point out that this connection is not entirely trivial --- in particular Schmerl's proof uses the fine structure theorem, and it only works for ordinals of the form $\omega^{\omega^\alpha}$. Alternatively, we can use a Sch\"utte-style ordinal analysis to determine the provably total functions of a theory --- although this is again a non-trivial task, particularly for large ordinals, where fundamental sequences can be delicate. Beklemishev \cite[Section~3]{beklemishev03} connects the provably total functions to iterations of $\Pi_2$-reflection, and another application of the fine structure theorem yields the $\Pi^0_1$-ordinal. While these general connections exist, it would be desirable to directly read off $\Pi^0_1$-ordinals from infinite proofs. The present note demonstrates that this is very much possible: Assume that the $\Pi_1$-formula $\varphi$ has a cut-free $\omega$-proof of height $\alpha$, provably in $\pra$. We will see that this implies
\begin{equation*}
 \pra+\forall_{\gamma<\alpha}\con_\gamma(\pra)\vdash\varphi.
\end{equation*}
As a corollary we obtain the above characterization of the $\Pi_1$-consequences of Peano arithmetic. To minimize technical difficulty we will focus on infinite proofs that arise from an embedding of Peano arithmetic. Nevertheless, it will be clear that the same arguments apply to other proofs in $\omega$-logic, once they are suitably formalized in~$\pra$. Also, we focus on iterations of consistency (i.e.\ $\Pi_1$-reflection), which seems to be the most important case. It would be easy to adapt the same arguments to iterations of $\Pi_n$-reflection, in order to read off bounds on Beklemishev's $\Pi^0_n$-ordinals (see~\cite{beklemishev03}).

\section*{Iterated Consistency}

On an intuitive level, we define iterations of consistency over primitive recursive arithmetic by the recursion
\begin{equation*}
 \mathbf T_\alpha=\pra+\forall_{\gamma<\alpha}\con(\mathbf T_\gamma).
\end{equation*}
We point out that this deviates from the definition used by Beklemishev, who takes
\begin{equation*}
 \mathbf T'_\alpha=\pra+\{\con(\mathbf T'_\gamma)\,|\,\gamma<\alpha\}.
\end{equation*}
The difference is just an index shift at limit stages: As $\mathbf T'_{\alpha+1}$ entails $\con(\mathbf T'_\alpha)$ it proves that all $\Pi_1$-consequences of $\mathbf T'_\alpha$ are true, which gives $\mathbf T'_{\alpha+1}\vdash\forall_{\gamma<\alpha}\con(\mathbf T'_\gamma)$. The stronger limit step will make for more appealing bounds in our context. More formally, the definition of the theories $\mathbf T_\alpha$ depends on an arithmetization of the relevant ordinals. In this note we work with the usual notation system for $\varepsilon_0$. Following \cite[Section~2]{beklemishev03}, it will be most convenient to formalize the relation ``$\varphi$ is provable in $\mathbf T_\alpha$'': Using an arithmetization $\pr(\cdot)$ of provability in $\pra$ the fixed point lemma yields a formula $\Box_\alpha(\varphi)$ with
\begin{equation}\label{eq:def-prov-in-pra-alpha}
 \pra\vdash\Box_\alpha(\varphi)\leftrightarrow\pr(\forall_{\gamma<\dot\alpha}\neg\Box_{\gamma}(0=1)\rightarrow\varphi).
\end{equation}
This very equivalence shows that $\Box_\alpha(\varphi)$ is $\Sigma_1$ in $\pra$. Thus
\begin{equation*}
 \con_\alpha(\pra):\equiv\neg\Box_\alpha(0=1)
\end{equation*}
is a $\Pi_1$-formula. Negating both sides of (\ref{eq:def-prov-in-pra-alpha}) we see
\begin{equation*}
 \pra\vdash\con_\alpha(\pra)\leftrightarrow\con(\pra+\forall_{\gamma<\dot\alpha}\con_\gamma(\pra)).
\end{equation*}
This suggests that
\begin{equation*}
 \mathbf T_\alpha:=\pra+\forall_{\gamma<\alpha}\con_\gamma(\pra)
\end{equation*}
captures the above intuition. Further justification comes from the result that equivalences such as (\ref{eq:def-prov-in-pra-alpha}) determine the relation $\Box_\alpha(\varphi)$ completely (see \cite[Lemma~2.3]{beklemishev03}). It may be astonishing that this holds over $\pra$, where we have limited access to transfinite induction. Instead, the result relies on a principle of ``reflexive induction'', due to Schmerl and Girard (see \cite{schmerl79}). Beklemishev states a slightly strengthened version, which will be important later:

\begin{lemma}\label{lem:reflexive-induction}
 For any formula $\varphi(\alpha)$ in the language of $\pra$, if we have
\begin{equation*}
 \pra\vdash\forall_\alpha(\pr(\forall_{\gamma<\dot\alpha}\varphi(\gamma))\rightarrow\varphi(\alpha))
\end{equation*}
--- we say that $\varphi$ is reflexively progressive --- then we can conclude
\begin{equation*}
 \pra\vdash\forall_\alpha\varphi(\alpha).
\end{equation*}
\end{lemma}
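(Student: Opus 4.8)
The plan is to deduce the lemma from L\"ob's theorem, applied to the sentence $A:\equiv\forall_\alpha\varphi(\alpha)$ itself. Recall that L\"ob's theorem --- a consequence of the fixed point lemma (already invoked above) together with the Hilbert--Bernays--L\"ob derivability conditions for $\pr$ --- tells us that $\pra\vdash\pr(A)\to A$ already forces $\pra\vdash A$, which is precisely the desired conclusion. So it suffices to establish
\begin{equation*}
 \pra\vdash\pr(A)\to\forall_\alpha\varphi(\alpha).
\end{equation*}
The intuition is that reflexive progressivity converts a \emph{formalized} proof of $A$ into an \emph{honest} proof of each instance $\varphi(\alpha)$: from $\pr(A)$ we extract $\pr(\forall_{\gamma<\dot\alpha}\varphi(\gamma))$ and feed it through the hypothesis.

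To carry this out I would first record the purely logical fact $\pra\vdash\forall_\alpha(A\to\forall_{\gamma<\alpha}\varphi(\gamma))$, which holds because $A$ quantifies over all notations while the bounded formula only asks for those below $\alpha$. Applying necessitation, then the formalized instantiation principle that a provable universal statement may be instantiated at the numeral $\dot\alpha$ (uniformly in the external $\alpha$), and finally distribution of the box over implication, yields
\begin{equation*}
 \pra\vdash\forall_\alpha\bigl(\pr(A)\to\pr(\forall_{\gamma<\dot\alpha}\varphi(\gamma))\bigr).
\end{equation*}
Combining this with the assumption that $\varphi$ is reflexively progressive gives $\pra\vdash\forall_\alpha(\pr(A)\to\varphi(\alpha))$, that is, $\pra\vdash\pr(A)\to A$, and L\"ob's theorem closes the argument.

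The point worth stressing is that nowhere does the argument appeal to transfinite induction along the notation system; the only genuine inputs are the derivability conditions and L\"ob's theorem. This is exactly why the principle remains available over $\pra$, despite its limited access to transfinite induction. Accordingly, I expect the main technical care to lie in the middle display: formalizing, uniformly in the free variable $\alpha$, the passage from $\pr(A)$ to $\pr(\forall_{\gamma<\dot\alpha}\varphi(\gamma))$. This amounts to treating the numeral term $\dot\alpha$ inside the provability predicate correctly and verifying that the bounded quantifier $\forall_{\gamma<\alpha}$, read via the primitive recursive ordering on the notation system for $\varepsilon_0$, behaves like an ordinary bounded quantifier for the purposes of these manipulations. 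The surrounding steps are then routine applications of the derivability conditions, and the same argument is visibly insensitive to the presence of additional parameters in $\varphi$, which accounts for Beklemishev's slight strengthening.
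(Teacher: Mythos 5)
Your proposal is correct and follows essentially the same route as the paper: both apply L\"ob's theorem to $\forall_\alpha\varphi(\alpha)$ and reduce to showing $\pra\vdash\pr(\forall_\alpha\varphi(\alpha))\rightarrow\forall_\alpha\varphi(\alpha)$, which is obtained by passing (inside $\pra$, via the derivability conditions) from $\pr(\forall_\alpha\varphi(\alpha))$ to $\pr(\forall_{\gamma<\dot\alpha}\varphi(\gamma))$ and feeding this into reflexive progressivity. The only difference is presentational: the paper carries out this middle step informally ``working in $\pra$'', whereas you spell out the necessitation, numeral-instantiation, and distribution steps explicitly.
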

\begin{proof}
We cite the short argument from \cite[Lemma~2.4]{beklemishev03}: To conclude by L\"ob's theorem it suffices to establish
\begin{equation*}
 \pra\vdash\pr(\forall_\alpha\varphi(\alpha))\rightarrow\forall_\alpha\varphi(\alpha).
\end{equation*}
Working in $\pra$, assume $\pr(\forall_\alpha\varphi(\alpha))$. In particular $\pr(\forall_{\gamma<\dot\alpha}\varphi(\gamma))$ holds for any $\alpha$. By assumption this implies $\varphi(\alpha)$. Since $\alpha$ was arbitrary we have~$\forall_\alpha\varphi(\alpha)$, as required.
\end{proof}

We will also need the following easy observation:

\begin{lemma}\label{lem:box-alpha-pi1-reflection}
 If the formula $\varphi(x_1,\dots,x_n)$ is $\Pi_1$ in $\pra$ then we have
\begin{equation*}
 \pra\vdash\forall_\alpha(\con_\alpha(\pra)\land\Box_\alpha(\varphi(\dot x_1,\dots,\dot x_n))\rightarrow\varphi(x_1,\dots,x_n)).
\end{equation*}
\end{lemma}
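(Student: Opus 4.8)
The plan is to establish this as the formalized $\Pi_1$-reflection principle for the theory $\mathbf T_\alpha$, via an internal consistency argument. Working inside $\pra$, I would fix $\alpha$ together with parameters $x_1,\dots,x_n$ (abbreviated $x$ below), assume both $\con_\alpha(\pra)$ and $\Box_\alpha(\varphi(\dot x))$, and aim for $\varphi(x)$. Since $\pra$ is classical, it suffices to derive a contradiction from the extra hypothesis $\neg\varphi(x)$.

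The first ingredient is formalized $\Sigma_1$-completeness. As $\varphi$ is $\Pi_1$ in $\pra$, its negation is provably equivalent to a $\Sigma_1$-formula, and provable $\Sigma_1$-completeness with a free parameter yields $\pra\vdash\neg\varphi(x)\rightarrow\pr(\neg\varphi(\dot x))$. So from the hypothesis $\neg\varphi(x)$ we obtain $\pr(\neg\varphi(\dot x))$.

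The second ingredient is that $\Box_\alpha$ inherits the Hilbert--Bernays--L\"ob derivability conditions from $\pr$ through the defining equivalence (\ref{eq:def-prov-in-pra-alpha}). Writing $R_\alpha$ for $\forall_{\gamma<\dot\alpha}\con_\gamma(\pra)$, that equivalence reads $\Box_\alpha(\psi)\leftrightarrow\pr(R_\alpha\rightarrow\psi)$. From $\pr(\neg\varphi(\dot x))$ I would first weaken under $\pr$ to get $\pr(R_\alpha\rightarrow\neg\varphi(\dot x))$, i.e.\ $\Box_\alpha(\neg\varphi(\dot x))$. Combining this with the assumed $\Box_\alpha(\varphi(\dot x))$, i.e.\ $\pr(R_\alpha\rightarrow\varphi(\dot x))$, the derivability conditions for $\pr$ merge the two into $\pr(R_\alpha\rightarrow(\varphi(\dot x)\land\neg\varphi(\dot x)))$, hence $\pr(R_\alpha\rightarrow 0=1)$, which by (\ref{eq:def-prov-in-pra-alpha}) is precisely $\Box_\alpha(0=1)$. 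This contradicts $\con_\alpha(\pra)=\neg\Box_\alpha(0=1)$, and the contradiction delivers $\varphi(x)$.

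The only slightly delicate point is the formalized $\Sigma_1$-completeness step with the free parameters $x_1,\dots,x_n$; everything else reduces to the standard derivability conditions for $\pr$, which transfer to $\Box_\alpha$ through the fixed-point equivalence. I expect no genuine obstacle here, and in particular no transfinite induction on $\alpha$ is required: the ordinal enters only through the fixed context $R_\alpha$ and is carried as a free parameter throughout, which is why the statement holds over $\pra$ despite its limited induction.
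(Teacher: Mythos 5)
Your proposal is correct and matches the paper's proof in all essentials: both arguments apply formalized $\Sigma_1$-completeness to the ($\Sigma_1$) negation of $\varphi$, unfold $\Box_\alpha(\varphi(\dot x_1,\dots,\dot x_n))$ via equivalence~(\ref{eq:def-prov-in-pra-alpha}), and combine the two facts under $\pr$ to reach $\Box_\alpha(0=1)$, contradicting $\con_\alpha(\pra)$. The only difference is presentational --- the paper argues by contrapositive where you argue by reductio, and it skips your intermediate step $\Box_\alpha(\neg\varphi(\dot x_1,\dots,\dot x_n))$ by combining the implications directly --- and your closing observation that no transfinite induction on $\alpha$ is needed is likewise consistent with the paper, which calls this an ``easy observation''.
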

\begin{proof}
 Arguing in $\pra$, consider an arbitrary ordinal $\alpha$. To establish the contrapositive of the claim, assume that we have $\neg\varphi(x_1,\dots,x_n)$. By $\Sigma_1$-completeness we get $\pr(\varphi(\dot x_1,\dots,\dot x_n)\rightarrow 0=1)$. Also assume $\Box_\alpha(\varphi(\dot x_1,\dots,\dot x_n))$, and observe that this means $\pr(\forall_{\gamma<\dot\alpha}\neg\Box_\gamma(0=1)\rightarrow\varphi(\dot x_1,\dots,\dot x_n))$. Combining the two implications we obtain $\pr(\forall_{\gamma<\dot\alpha}\neg\Box_\gamma(0=1)\rightarrow0=1)$. This amounts to $\Box_\alpha(0=1)$, which is equivalent to $\neg\con_\alpha(\pra)$, completing the proof of the contrapositive.
\end{proof}

\section*{Bounds from Infinite Proofs}

Let us briefly review the ordinal analysis of Peano arithmetic via infinite proofs, and its formalization in $\pra$. We will follow the approach of Buchholz \cite{buchholz91}, where the reader can find all missing details. The infinitary proof system is based on (Tait-style) sequent calculus: Rather than single formulas one derives finite sets (``sequents''), which are to be read disjunctively. Thus, the sequent $\Gamma\equiv\{\varphi_1,\dots,\varphi_n\}$ is interpreted as the disjunction $\bigvee\Gamma\equiv\varphi_1\lor\dots\lor\varphi_n$. As usual we write $\Gamma,\varphi$ rather than $\Gamma\cup\{\varphi\}$. The characteristic feature of our infinite calculus is the $\omega$-rule
\begin{equation*}
\begin{bprooftree}
\AxiomC{$\Gamma,\varphi(0)$}
\AxiomC{$\Gamma,\varphi(1)$}
\AxiomC{$\cdots$}
\RightLabel{,}
\TrinaryInfC{$\Gamma,\forall_n\varphi(n)$}
\end{bprooftree}
\end{equation*}
with a premise for each numeral. In other words, if we know that the disjunction $\bigvee\Gamma\lor\varphi(n)$ holds for each number~$n$ then we can infer $\bigvee\Gamma\lor\forall_n\varphi(n)$. Similarly, there are rules to introduce propositional connectives and existential quantifiers, in these cases with finitely many premises. As axioms we take true prime formulas. To derive a formula we combine these rules into a proof tree. In the presence of the infinitary $\omega$-rule these trees will generally have infinite height. The reader may observe that any true arithmetical formula can be derived with finite height --- but this is not much use if we work in $\pra$ or another weak theory, which recognizes few formulas as true. What $\pra$ does know (see below) is that proofs in Peano arithmetic can be translated into infinite proofs. The resulting proof trees have height below $\omega\cdot 2$. At the same time, this embedding introduces cut rules
\begin{equation*}
\begin{bprooftree}
\AxiomC{$\Gamma,\varphi$}
\AxiomC{$\Gamma,\neg\varphi$}
\RightLabel{.}
\BinaryInfC{$\Gamma$}
\end{bprooftree}
\end{equation*}
Note that the cut formula $\varphi$ may be very complex, even if we know that the end-sequent $\Gamma$ is very simple. This can be problematic when we check properties by induction over proofs. The famous method of cut elimination removes that obstacle: It allows us to reduce the complexity of cut formulas, increasing the height of the proof by a power of $\omega$ in each step. In the end we obtain a cut-free proof with height below~$\varepsilon_0$. From this proof we can read off the desired bounds. There are several ways to formalize infinite proofs in $\pra$. One option would be to work with primitive recursive proof trees, represented by numerical codes. We choose the particularly elegant approach of Buchholz \cite{buchholz91}: The idea is to name proofs by the role they play in the cut elimination process. For each (finite) proof $d$ in Peano arithmetic one introduces a constant symbol $[d]$, which names the embedding of~$d$ into the infinite system. Furthermore, one adds a unary function symbol $E$: If~$h$ is a name for an infinite proof tree then the term $E h$ names the result of cut elimination (with cut rank reduced by one). Auxiliary function symbols $I_{k,\varphi}$ and $R_\psi$ refer to inversion and reduction, the usual ingredients of cut elimination. The resulting set of terms is denoted by $\zstar$. Crucially, there is a primitive recursive function $\mathfrak s:\mathbb N\times\zstar\rightarrow\zstar$ which computes codes for the immediate subtrees of a proof tree. For example, if the last rule of $h$ (also read off by a primitive recursive function) is not a cut then we have $\mathfrak s_n(E h)=E\mathfrak s_n(h)$. Intuitively this means that we apply cut elimination to the $n$-th subtree of $h$ in order to get the $n$-th subtree of~$Eh$. Officially, the equation $\mathfrak s_n(E h)=E\mathfrak s_n(h)$ is part of the definition of $\mathfrak s$, which goes by recursion over the terms in $\zstar$. Similarly, we have functions to read off the end sequent, the cut rank, and the ordinal height of (the proof represented by) a given term. We should mention that it is crucial to extend the infinitary system by a ``repetition rule'', which simply repeats its premise: It permits us to ``call'' a proof even if we cannot immediately determine its last rule. The point is that Buchholz' approach allows a very smooth formalization in $\pra$: We simply work with a system of terms, without official reference to their interpretation as infinite trees. In $\pra$ we can show that the proof terms are ``locally correct''. Amongst other things, this means that the ordinal height of $\mathfrak s_n(h)$ is smaller than the ordinal height of~$h$. Having explained this approach we can revert to more traditional notation: By $h\vdash^\alpha_0\Gamma$ we express that $h\in\zstar$ codes a cut-free proof of (a sub-sequent of)~$\Gamma$ with ordinal height~$\alpha$. We write $\vdash^\alpha_0\Gamma$ to indicate that this holds for some term~$h\in\zstar$.

When we try to track usual (i.e.~finite) provability through an infinite proof, the \mbox{$\omega$-rule} poses an obstruction: From $\pra\vdash\varphi(n)$ for all $n\in\mathbb N$ we cannot infer $\pra\vdash\forall_n\varphi(n)$. On the other hand, $\pra\vdash\forall_n\pr(\varphi(\dot n))$ does imply $\pra+\con(\pra)\vdash\forall_n\varphi(n)$, provided that $\varphi$ is a $\Pi_1$-formula. This explains why iterated consistency is needed. Iterations of consistency are important for a second reason as well: They will make it possible to use the reflexive induction principle. These ideas lead to the main result of the present note:

\begin{theorem}
 Provably in $\pra$, we have
\begin{equation*}
 \vdash^\alpha_0\Gamma\quad\Rightarrow\quad\Box_\alpha(\bigvee\Gamma)
\end{equation*}
for any sequent $\Gamma$ that consists of $\Pi_1$-formulas only.
\end{theorem}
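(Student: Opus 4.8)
The plan is to establish the statement by \emph{reflexive induction} on the ordinal height, using Lemma~\ref{lem:reflexive-induction}. I introduce the formula
\[
 \varphi(\alpha):\equiv\forall h\,\forall\Gamma\,\bigl(\Gamma\text{ is }\Pi_1\land h\vdash^\alpha_0\Gamma\rightarrow\Box_\alpha(\bigvee\Gamma)\bigr),
\]
so that the assertion of the theorem is precisely $\pra\vdash\forall\alpha\,\varphi(\alpha)$. By Lemma~\ref{lem:reflexive-induction} it suffices to prove that $\varphi$ is reflexively progressive, i.e.\ that
\[
 \pra\vdash\forall\alpha\bigl(\pr(\forall_{\gamma<\dot\alpha}\varphi(\gamma))\rightarrow\varphi(\alpha)\bigr).
\]
Accordingly I fix $\alpha$, assume $\pr(\forall_{\gamma<\dot\alpha}\varphi(\gamma))$, and fix a term $h$ with $h\vdash^\alpha_0\Gamma$ for a $\Pi_1$-sequent $\Gamma$; the goal is $\Box_\alpha(\bigvee\Gamma)$.

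The proof then splits on the last rule of $h$, which $\pra$ determines by local correctness. The base case is an axiom, where $\Gamma$ contains a true prime formula; decidability and $\Sigma_1$-completeness give $\pr(\bigvee\Gamma)$, and since a provable formula remains provable after prefixing a hypothesis this yields $\pr(\forall_{\gamma<\dot\alpha}\con_\gamma(\pra)\rightarrow\bigvee\Gamma)$, which by (\ref{eq:def-prov-in-pra-alpha}) is exactly $\Box_\alpha(\bigvee\Gamma)$. Every remaining rule I would treat by one uniform pattern. Local correctness supplies the immediate subtrees $\mathfrak s_k(h)\vdash^{\alpha_k}_0\Delta_k$ with $\alpha_k<\alpha$ and with each $\Delta_k$ again $\Pi_1$; as these facts are provable in $\pra$, they are available under the box, and combined with the boxed induction hypothesis $\forall_{\gamma<\dot\alpha}\varphi(\gamma)$ they give $\pr\bigl(\forall k\,\Box_{\alpha_k}(\bigvee\Delta_k)\bigr)$.

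It remains to turn these boxed premises into the conclusion. I unfold the goal $\Box_\alpha(\bigvee\Gamma)$ via (\ref{eq:def-prov-in-pra-alpha}), reason under the resulting box, and there assume the defining axioms $\forall_{\gamma<\dot\alpha}\con_\gamma(\pra)$ of $\mathbf T_\alpha$. Because $\alpha_k<\alpha$, each $\con_{\alpha_k}(\pra)$ is available from the axiom $\forall_{\gamma<\dot\alpha}\con_\gamma(\pra)$, so Lemma~\ref{lem:box-alpha-pi1-reflection} at index $\alpha_k$, applied to the $\Pi_1$-formula $\bigvee\Delta_k$, converts $\Box_{\alpha_k}(\bigvee\Delta_k)$ into the plain truth $\bigvee\Delta_k$. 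The local soundness of the rule then assembles these truths into $\bigvee\Gamma$: for the $\omega$-rule, writing $\Gamma=\Gamma_0,\forall_n\psi(n)$ for the principal formula, the premises read $\bigvee\Gamma_0\lor\psi(k)$ for every $k$, so either a disjunct of $\Gamma_0$ holds or else $\psi(k)$ holds for all $k$ and hence $\forall_n\psi(n)$, giving $\bigvee\Gamma$ in either case; the repetition rule is the special case where the single premise is $\Gamma$ itself. Discharging the assumption $\forall_{\gamma<\dot\alpha}\con_\gamma(\pra)$ under the box produces $\pr(\forall_{\gamma<\dot\alpha}\con_\gamma(\pra)\rightarrow\bigvee\Gamma)$, that is, $\Box_\alpha(\bigvee\Gamma)$.

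The principal difficulty I anticipate is the bookkeeping with three nested provability operators --- the external ``$\pra\vdash$'', the reflexive-induction box $\pr(\cdot)$, and the box implicit in $\Box_\alpha$. The decisive structural point is that reflexive induction hands us the induction hypothesis \emph{beneath} $\pr$, which is exactly the place where it can interact with the consistency assumptions $\forall_{\gamma<\dot\alpha}\con_\gamma(\pra)$ that unfold out of $\Box_\alpha$ via (\ref{eq:def-prov-in-pra-alpha}); Lemma~\ref{lem:box-alpha-pi1-reflection} is the tool that mediates this interaction. A second, more formalization-sensitive point is to check, rule by rule, that every premise sequent $\Delta_k$ is again $\Pi_1$ and that $\bigvee\Delta_k$ counts as $\Pi_1$ over $\pra$ (here one uses that $\Pi_1$ is closed under finite disjunction); in particular one must ensure that no genuinely $\Sigma_1$ formula can be principal in an inference whose end-sequent is $\Pi_1$. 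This is the delicate requirement behind restricting to suitable formalizations of infinite proofs.
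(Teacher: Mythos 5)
Your proposal follows the paper's own route almost exactly: reflexive induction via Lemma~\ref{lem:reflexive-induction}, a case distinction on the last rule of $h$, an application of the formalized Lemma~\ref{lem:box-alpha-pi1-reflection} underneath the box to turn $\Box_{\alpha_k}(\bigvee\Delta_k)$ into the truth of $\bigvee\Delta_k$, and equivalence~(\ref{eq:def-prov-in-pra-alpha}) to recognize the discharged implication as $\Box_\alpha(\bigvee\Gamma)$. The axiom case, the $\Pi_1$-bookkeeping for premise sequents, and the assembly of the $\omega$-rule premises into $\bigvee\Gamma$ all match the paper's argument.

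There is, however, a gap at the decisive step: how the premise derivations get underneath $\pr$. You apply local correctness in the meta-theory (``local correctness supplies the immediate subtrees $\mathfrak s_k(h)\vdash^{\alpha_k}_0\Delta_k$'') and then transfer: ``as these facts are provable in $\pra$, they are available under the box'', concluding $\pr(\forall k\,\Box_{\alpha_k}(\bigvee\Delta_k))$. For the $\omega$-rule, with its infinitely many premises, this is exactly the quantifier interchange that fails in $\pra$: establishing the subtree facts one $k$ at a time and pushing each under the box (by formalized $\Sigma_1$-completeness) yields only $\forall k\,\pr(\cdots\dot k\cdots)$, one boxed statement per standard $k$; it does \emph{not} yield the single statement $\pr(\forall k\cdots)$, and passing from the former to the latter is an $\omega$-rule for $\pr$ that $\pra$ lacks --- its failure is the very reason iterated consistency enters in the first place. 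The paper's proof is arranged to avoid this: the assumption $h\vdash^\alpha_0\Gamma$, together with the identity of the last rule, is a $\Sigma_1$-fact about the term $h$, so formalized $\Sigma_1$-completeness gives $\pr(\dot h\vdash^{\dot\alpha}_0\dot\Gamma\land\cdots)$ first, and the local-correctness argument is then carried out \emph{inside} the object theory, where its conclusion $\pr(\forall_n\exists_{\alpha_n<\dot\alpha}\,\mathfrak s_n(\dot h)\vdash^{\alpha_n}_0\dot\Gamma,\dot\varphi(n))$ quantifies over all, including nonstandard, $n$ (note also that the existential over the heights $\alpha_n$ must live inside the box, which your notation $\Box_{\alpha_k}$ with $k$ bound under $\pr$ obscures). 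Your phrase can be read charitably as a compression of this --- local correctness is a theorem of $\pra$, hence holds under the box, and the $\Sigma_1$-data transfers --- but as written it conflates ``each instance is provable'' with ``the universal statement is provable'', and only the latter completes the proof. The repair requires precisely the device you reserve for the axiom case, formalized $\Sigma_1$-completeness, now applied to the derivability statement $h\vdash^\alpha_0\Gamma$ itself.
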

\begin{proof}
 We argue by reflexive induction on $\alpha$, as established in Lemma~\ref{lem:reflexive-induction}. Working in~$\pra$, we may thus assume
\begin{equation*}
 \pr(\forall_{\gamma<\dot\alpha}\forall_{h\in\zstar}(h\vdash^\gamma_0\Gamma\land\text{``$\Gamma$ consists of $\Pi_1$-formulas''}\rightarrow\Box_\gamma(\bigvee\Gamma)))
\end{equation*}
in order to deduce the claim for $\alpha$. The antecendent of that claim provides $h\vdash^\alpha_0\Gamma$ for some $h\in\zstar$, where $\Gamma$ is a sequent of $\Pi_1$-formulas. We must deduce $\Box_\alpha(\bigvee\Gamma)$. Let us assume that $h$ ends in an $\omega$-rule that introduces the formula $\forall_n\varphi(n)\in\Gamma$ ---~in all other cases the argument is similar and easier. By $\Sigma_1$-completeness we can recover the assumptions inside $\pra$, i.e.\ we get
\begin{equation*}
 \pr(\dot h\vdash^{\dot\alpha}_0\dot\Gamma\land\text{``$\dot h$ ends in an $\omega$-rule that introduces $\forall_n\varphi(n)$''}).
\end{equation*}
Recall that $\mathfrak s_n(h)\in\zstar$ denotes the $n$-th immediate subtree of the infinite proof denoted by $h$. By the local correctness of $\zstar$, this subtree deduces the premise~$\Gamma,\varphi(n)$ of the $\omega$-rule, with ordinal height $\alpha_n<\alpha$. Applying this argument in the meta theory $\pra$ would only give the subtrees $\mathfrak s_n(h)$ for ``standard'' numbers $n$. This is not enough for our purpose, but the same argument in the object theory does yield
\begin{equation*}
 \pr(\forall_n\exists_{\alpha_n<\dot\alpha}\,\mathfrak s_n(\dot h)\vdash^{\alpha_n}_0\dot\Gamma,\dot\varphi(n)).
\end{equation*}
Note that $\varphi(n)$ is a $\Pi_1$-formula (indeed a $\Delta_0$-formula), since $\forall_n\varphi(n)$ occurs in $\Gamma$. Thus the reflexive induction hypothesis implies
\begin{equation*}
 \pr(\forall_n\exists_{\alpha_n<\dot\alpha}\,\Box_{\alpha_n}(\bigvee\Gamma\lor\varphi(\dot n))).
\end{equation*}
Formalizing Lemma~\ref{lem:box-alpha-pi1-reflection} in $\pra$ we get
\begin{equation*}
 \pr(\forall_{\gamma<\dot\alpha}\con_\gamma(\pra)\rightarrow\forall_n(\bigvee\Gamma\lor\varphi(n))).
\end{equation*}
In view of $\forall_n\varphi(n)\in\Gamma$ one obtains
\begin{equation*}
 \pr(\forall_{\gamma<\dot\alpha}\con_\gamma(\pra)\rightarrow\bigvee\Gamma).
\end{equation*}
By equivalence (\ref{eq:def-prov-in-pra-alpha}) this amounts to $\Box_\alpha(\bigvee\Gamma)$, as required.
\end{proof}

The result cited in the introduction is an easy consequence:

\begin{corollary}[Schmerl]
 We have
\begin{equation*}
 \pa\equiv_{\Pi_1}\pra+\{\con_\alpha(\pra)\,|\,\alpha<\varepsilon_0\}.
\end{equation*}
\end{corollary}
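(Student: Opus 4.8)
The plan is to prove the two $\Pi_1$-inclusions separately; write $\mathbf S:=\pra+\{\con_\gamma(\pra)\mid\gamma<\varepsilon_0\}$ for the right-hand side. The substantive inclusion --- that every $\Pi_1$-theorem of $\pa$ is provable in $\mathbf S$ --- is where the Theorem does its work, while the converse is the classical lower bound.

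For the first inclusion I would take a $\Pi_1$-sentence $\varphi$ with $\pa\vdash\varphi$ and run it through the ordinal analysis reviewed above. Fixing a $\pa$-proof $d$ of $\varphi$, the embedding names it by $[d]\in\zstar$, a proof of $\{\varphi\}$ of height below $\omega\cdot 2$ with some finite cut rank $r$; applying cut elimination $r$ times, the Buchholz-style formalization gives, provably in $\pra$, that $E^r[d]\vdash^\alpha_0\{\varphi\}$ for a fixed $\alpha<\varepsilon_0$. In particular $\pra$ proves $\vdash^\alpha_0\{\varphi\}$. Since the sequent $\{\varphi\}$ consists of the single $\Pi_1$-formula $\varphi=\bigvee\{\varphi\}$, the Theorem (whose conclusion is itself established in $\pra$) yields $\pra\vdash\Box_\alpha(\varphi)$. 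Instantiating Lemma~\ref{lem:box-alpha-pi1-reflection} at this $\alpha$ in the case $n=0$ gives $\pra\vdash\con_\alpha(\pra)\land\Box_\alpha(\varphi)\rightarrow\varphi$, so that $\pra\vdash\con_\alpha(\pra)\rightarrow\varphi$. As $\alpha<\varepsilon_0$ the sentence $\con_\alpha(\pra)$ is an axiom of $\mathbf S$, whence $\mathbf S\vdash\varphi$, as required.

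For the converse inclusion it suffices to show $\pa\vdash\con_\gamma(\pra)$ for every $\gamma<\varepsilon_0$: then $\pa$ proves every axiom of $\mathbf S$, hence every theorem of $\mathbf S$, and since the $\con_\gamma(\pra)$ are $\Pi_1$ and $\pa$ extends $\pra$ this matches the $\Pi_1$-theorems in both directions. To obtain $\pa\vdash\con_\gamma(\pra)$ I would first check that the $\Pi_1$-formula $\con_\gamma(\pra)$ is progressive in $\gamma$ over $\pa$: working in $\pa$ and assuming $\forall_{\delta<\gamma}\con_\delta(\pra)$, were $\pra+\forall_{\delta<\dot\gamma}\con_\delta(\pra)$ inconsistent then $\pra$ would prove the $\Sigma_1$-sentence $\exists_{\delta<\dot\gamma}\neg\con_\delta(\pra)$, and the $\Sigma_1$-reflection for $\pra$ available in $\pa$ would contradict the assumption; hence $\con(\pra+\forall_{\delta<\dot\gamma}\con_\delta(\pra))$, which by the equivalence preceding the definition of $\mathbf T_\alpha$ is exactly $\con_\gamma(\pra)$. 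Gentzen's theorem that $\pa$ proves transfinite induction below $\varepsilon_0$ (for arithmetical, in particular $\Pi_1$, formulas) then gives $\pa\vdash\forall_{\gamma<\alpha}\con_\gamma(\pra)$ for each standard $\alpha<\varepsilon_0$, and so $\pa\vdash\con_\gamma(\pra)$ for every $\gamma<\varepsilon_0$.

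The conceptual weight of the corollary sits entirely in the Theorem, and the two remaining difficulties are technical rather than deep. In the first inclusion the only delicate point is to quote the formalized embedding and cut elimination in the precise shape ``provably in $\pra$, $\vdash^\alpha_0\{\varphi\}$ for a concrete $\alpha<\varepsilon_0$'', which is exactly the output of the Buchholz-style machinery set up above and for which I would refer to \cite{buchholz91} for the missing details. In the converse inclusion the genuine ingredient is the classical lower bound, the provability of transfinite induction below $\varepsilon_0$ in $\pa$, together with the routine verification of progressiveness via $\Sigma_1$-reflection. I expect the former --- pinning down the ordinal height of the cut-free proof term inside $\pra$ --- to be the main point requiring care.
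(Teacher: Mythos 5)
Your proposal is correct and follows essentially the same route as the paper: the forward direction runs embedding and cut elimination through the Theorem and then Lemma~\ref{lem:box-alpha-pi1-reflection} to get $\pra+\con_\alpha(\pra)\vdash\varphi$, and the converse establishes $\pa\vdash\con_\gamma(\pra)$ by transfinite induction below $\varepsilon_0$, with progressiveness secured by (the contrapositive of) $\Sigma_1$-reflection over $\pra$ inside $\pa$. The only difference is expository: you spell out the proof term $E^r[d]$ and the reflection argument in more detail, while the paper additionally notes the sharper bound $\pra+\forall_{\gamma<\alpha}\con_\gamma(\pra)\vdash\varphi$ promised in its introduction.
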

\begin{proof}
First, assume that $\varphi$ is a $\Pi_1$-theorem of Peano arithmetic. By embedding and cut elimination (see \cite{buchholz91}) we get $\vdash^\alpha_0 \varphi$ for some ordinal $\alpha<\varepsilon_0$, provably in~$\pra$. In this situation, the previous theorem yields $\pra\vdash\Box_\alpha(\varphi)$. Using Lemma~\ref{lem:box-alpha-pi1-reflection} we can infer \mbox{$\pra+\con_\alpha(\pra)\vdash\varphi$}, as required. In fact, equivalence~(\ref{eq:def-prov-in-pra-alpha}) and the soundness of $\pra$ give the sharper bound
\begin{equation*}
 \pra+\forall_{\gamma<\alpha}\con_\gamma(\pra)\vdash\varphi,
\end{equation*}
as promised in the introduction. Conversely, Peano arithmetic proves $\con_\alpha(\pa)$ by transfinite induction up to any fixed $\alpha<\varepsilon_0$: Assume $\forall_{\beta<\gamma}\con_\beta(\pra)$ by induction hypothesis. The contrapositive of $\Sigma_1$-reflection over $\pra$, which is provable in $\pa$, yields $\con(\pra+\forall_{\beta<\dot\gamma}\con_\beta(\pra))$, or equivalently $\con_\gamma(\pra)$.
\end{proof}

\bibliographystyle{alpha}
\bibliography{Iterated-Consistency_Freund.bib}

\end{document}